 \newcommand{\private}[1]{}   
\newcommand{\calC}{{\mathcal{C}}}
\numberwithin{equation}{section}
\DeclareMathOperator{\hoTot}{hoTot}
\DeclareMathOperator{\hofibre}{hofibre}
\newcommand{\stirlingone}[2]{\genfrac{[}{]}{0pt}{}{#1}{#2}}
\newcommand{\stirlingtwo}[2]{\genfrac{\{}{\}}{0pt}{}{#1}{#2}}
\newcommand{\eulernb}[2]{\genfrac{\llangle}{\rrangle}{0pt}{}{#1}{#2}}
\newcommand{\BK}{{\mathbb K}}
\newcommand{\BR}{{\mathbb R}}
\newcommand{\BN}{{\mathbb N}}
\newcommand{\Embd}{\operatorname{Emb}_\partial}
\newcommand{\Immd}{\operatorname{Imm}_\partial}
\newcommand{\width}{\operatorname{width}}
\newcommand{\Ho}{\operatorname{H}}
\newcommand{\map}{{\operatorname{map}}}
\newcommand{\im}{{\operatorname{im}}}
\newcommand{\Conf}{{\operatorname{Conf}}}
\newcommand{\Confd}{{\operatorname{Conf}_{\partial}}}
\newcommand{\Tot}{\operatorname{Tot}}
\newcommand{\hookto}{\hookrightarrow}
\newcommand{\KK}{{\mathcal{K}}}
\newcommand{\LL}{{\mathcal{L}}}
\newcommand{\uu}[1]{\underline{{#1}}}
\newcommand{\uuu}[1]{\underline{\underline{{#1}}}}
\newcommand{\HBKSS}{$\Ho^*\operatorname{BKSS}$\ }
\theoremstyle{plain}
\newtheorem{thm}{Theorem}[section]
\newtheorem{prop}[thm]{Proposition}
\newtheorem{lemma}[thm]{Lemma}
\newtheorem{cor}[thm]{Corollary}
\theoremstyle{definition}
\theoremstyle{remark}
\newcommand{\refT}[1]{Theorem~\ref{T:#1}}
\newcommand{\refC}[1]{Corollary~\ref{C:#1}}
\newcommand{\refP}[1]{Proposition~\ref{P:#1}}
\newcommand{\refL}[1]{Lemma~\ref{L:#1}}
\newcommand{\refN}[1]{(\ref{#1})}
\begin{document}


\title{Euler series, Stirling numbers and the growth of the homology of the space of long links.}


\author{Guillaume Komawila}
\address{CIPMA}
\email{komawila@yahoo.fr} 
\author{Pascal Lambrechts}
\address{Institut Math\'{e}matique, 2 Chemin du Cyclotron, B-1348 Louvain-la-Neuve, Belgium}
\email{lambrechts@math.ucl.ac.be}
\urladdr{http://milnor.math.ucl.ac.be/plwiki}
\subjclass[2010]{Primary: 57Q45; Secondary: 55P62, 57R40}
\keywords{knot spaces, embedding calculus, formality, operads, Bousfield-Kan spectral sequence}


\begin{abstract}
We study the Bousfield-Kan spectral sequence associated to the
Munson-Voli\'c cosimplicial model for the space of long links with
$\ell$ strings in $\BR^N$. We compute explicitely some
Euler-Poincar\'e series associated to the second page of that spectral
sequence and deduce exponential growth of their Betti numbers.
\end{abstract}

\maketitle



\section{Introduction}
A long link with $\ell$ strings in $\BR^N$ is a smooth embedding of
$\ell$ disjoint copies of $\BR$ in $\BR^N$ with a fixed behaviour at
infinity. This generalizes the classical notion of long knots. In
\cite{Sin:TSK} and \cite{Sin:OKS}, D.~Sinha has
constructed a cosimplicial model for the space of long knots, and
V.~Turchin, I.~Voli\'c and the second author have proved in 
\cite{LTV:HQLK}  that the associated cohomology Bousfield-Kan spectral
sequence
collapses at the $E_2$ page.
B.~Munson and I.~Voli\'c, \cite{MuVo:HoSt}, have constructed a cosimplicial model for the
space of long links with $\ell$ strings, in the same spirit as 
Sinha's cosimplicial model for the space of long knots.
In this paper, we study  the cohomology Bousfield-Kan spectral sequence
associated to the Munson-Voli\'c cosimplicial model. 

Our main result is an explicit closed
formula for the Euler characteristic  of each line of the second page
of that spectral sequence. This computation is based on some
interesting relation between the dimensions of the summands of the
$E_2$-page of the spectral sequence and some Stirling numbers.
Since these Euler characteristic have an
exponential growth, we deduce an exponential growth of the dimension
of the associated graded space of $E_2$. 
We plan to prove in a future
paper
that the spectral sequence collapses at the $E_2$ page, generalizing
the main result of  \cite{LTV:HQLK}. This will imply the exponential
growth of the Betti numbers of the space of long links, and even of
long links modulo $\ell$-fold product of the space of long knots
(which is a retract of the space of long links, as we will see in this
paper.)

\subsection{Acknowledgment}
The present work is supported in part by a ICMPA-UCL Project between the International
Chair in Mathematical Physics and Applications (ICMPA-UNESCO Chair) at the University of
Abomey-Calavi (Cotonou, Benin), and the Institute of Research in Mathematics and Physics
(IRMP) at the Catholic University of Louvain (UCL, Louvain-la-Neuve, Belgium), the latter
being the Institution financing entirely this Project whose purpose is to support the training
of mathematicians and physicists from the Democratic Republic of Congo, in particular at the
University of Kinshasa (Kinshasa, DRC). 
The work of Komawila is supported by a PhD Fellowship held alternatively at both Institutions
within the framework of this ICMPA-UCL Project. 
We thank Prof. Norbert Hounkounou and Jan Govaerts for making this
collaboration possible.
The results of this paper have been presented
 in a Fields institute conference  celebrating Yves Felix's birthday
 and  organized by
Barry Jessup and Paul-Eugene Parent.
We also thank M. Kauers for the proof of \refP{S1S2}. 

\section{Long knots and long strings}

Fix $N\geq3$. A long knot is  a smooth embedding
\[f\colon\BR\hookto\BR^N\]
satisfying the  boundary conditions
\begin{equation}
\label{E:BCK}
\begin{cases}
  f([-1,1])\,\subset\,[-1,1]\times\BR^{N-1}\\
f(t)=(t,0,\dots,0)&\textrm{if }|t|\geq1.
\end{cases}
\end{equation}
In other words $f$ should be thought of as a string parametrized by
$[-1,1]$ and knotted inside the box
$[-1,1]\times\BR^{N-1}$
with its two extremities attached to the points $(\pm1,0,\dots,0)$.
The space of all these embeddings, endowed with a suitable topology
(the weak $\calC^\infty$-topology), is denoted by
\begin{equation}
\label{E:EmbdR}
\Embd(\BR,\BR^N)
\end{equation}
where the decoration $\partial$ is a reminder of the boundary
conditions \refN{E:BCK}.

We generalize this to multiple strings. Let $\ell\geq1$ be an integer
and set $[\ell]:=\{0,\dots,\ell-1\}$ a set of cardinality $\ell$. A \emph{long link with $\ell$
strings} is a smooth embedding
\[g\colon\BR\times[\ell]\hookto\BR^N\]
satisfying the following boundary conditions
\begin{equation}
\label{E:BCL}
\begin{cases}
  g([-1,1]\times[\ell])\,\subset\,[-1,1]\times\BR^{N-1}\\
g(t,i)=(t,i,0,\dots,0)&\textrm{if }|t|\geq1\textrm{ and }i=0,\dots,\ell-1.
\end{cases}
\end{equation}
We denote the space of all these embeddings by
\[\Embd(\BR\times[\ell],\BR^N).\]
For $\ell=1$ this space coincide with the space $\refN{E:EmbdR}$.

\subsection{The $\ell$-fold product of the space of long knots as a
  retract up to homotopy of the space of long links}
Each of the $\ell$ strings of a link can be seen as a long knot. More
precisely, given a long link $g\in\Embd(\BR\times[\ell],\BR^N)$, we can take its
restriction to the $i$th string 
\[g_{(i)}:=\left(g|\BR\times\{i\}\right)\colon\BR\times\{i\}\hookto\BR^N\]
for $i=0,\dots,\ell-1$. For $s\in\BR$, consider the translation of
$\BR^N$ in
the second coordinate direction
\begin{equation}
\label{E:tau}
\tau_s\colon\BR^N\to\BR^N\,,\,(x_1,\dots,x_N)\mapsto(x_1,x_2+s,x_3,x_4,\dots,x_N).
\end{equation}
Then the composite $\tau_{-i}\circ g_{(i)}$ is a long knot (after
identifying $\BR\times\{i\}$ with $\BR$.)
Hence we get a map
\begin{eqnarray}
\label{E:rho}
\rho\colon\Embd(\BR\times[\ell],\BR^N)&\longrightarrow&
\Embd(\BR,\BR^N)^{\times\ell}\\
\notag
g&\mapsto&(\tau_0\circ g_{(0))},\dots,\tau_{-(\ell-1)}\circ g_{(\ell-1)}),
\end{eqnarray}
which associates to a long link the $\ell$-tuple of the restricted long
knots.

Actually $\rho$ is a retraction up to homotopy. Indeed we can define
a map in the other direction
\begin{equation}
\label{E:iota}
\iota\colon\Embd(\BR,\BR^N)^{\times\ell}
\to
\Embd(\BR\times[\ell],\BR^N)
\end{equation}
which sends an $\ell$-tuple of long knots to a link obtained by
juxtaposition of these knots, avoiding to link any two strings.
Let us define this map $\iota$ more precisely. Given a long knot $f$
define its  \emph{width} as the number
\[
\width(f):=\inf\{r\geq0:f([-1,1])\,\subset\,[-1,1]\times[-r,r]^{N-1}\},
\]
which depends
continuously on $f$.
For $R>0$ consider the linear map
\[H_R\colon\BR^N\to\BR^N\,,\,(x_1,x_2,\dots,x_N)\mapsto (x_1,R\cdot
x_2,\dots,R\cdot x_N)\]
which is the product of the identity map on $\BR$ and a homothety of
rate $R$ on $\BR^{N-1}$.
If $f$ a long knot then $H_R\circ f$ is also a long knot and
\[
\width(H_R\circ f)=R\cdot\width(f).
\]
In particular, for $R=\min(\width(f),1/4)$,
\[
H_{\min(\width(f),1/4)}\circ f
\]
is a long knot of width $\leq1/4$.

We now define the map
$\iota$ of \refN{E:iota} as follows.
Let $(f_0,\dots,f_{\ell -1})$ be an $\ell$-tuple of long knots.
Then we define
\[g=\iota(f_0,\dots,f_{\ell-1})\]
as the long link characterized by 
\[g|\BR\times{\{i\}}\,=\,\tau_{i}\circ H_{\min(\width(f_i),1/4)}\circ f_i,
\]
for $i=0,\cdots,\ell-1$, where $\tau_s$  is the vertical translation \refN{E:tau}.

The composition
\begin{equation}
\label{E:EmbKLK}
\Embd(\BR,\BR^N)^{\times\ell}
\stackrel{\iota}\longrightarrow
\Embd(\BR\times[\ell],\BR^N)
\stackrel{\rho}\longrightarrow
\Embd(\BR,\BR^N)^{\times\ell}
\end{equation}
is homotopic to the identity map, the homotopy being built using
homotopies
\[H_{\lambda\,+\,(1-\lambda)\cdot\min(\width(f_i),1/4)}\]
for $0\leq\lambda\leq1$.
Thus $\Embd(\BR,\BR^N)^{\times\ell}$ can be seen as a retract up to homotopy
of $\Embd(\BR\times[\ell],\BR^N)$.
By abuse of notation we look at $\iota$ as an inclusion and we
consider the \emph{pair of spaces}
\[\left(\Embd(\BR\times[\ell],\BR^N)\,,\,\Embd(\BR,\BR^N)^{\times\ell}\right).\]

\subsection{Long embeddings modulo immersions}
As often when dealing with spaces of embeddings, we will consider
these spaces \emph{modulo immersions}. More precisely, a \emph{long immersed
$1$-string} is a smooth immersion
\[f\colon\BR\looparrowright\BR^N\]
with boundary condtition \refN{E:BCK} and the space of long immersed
$1$-strings is denoted by
\[\Immd(\BR,\BR^N).\]
We have an inclusion
\[\Embd(\BR,\BR^N)\hookto \Immd(\BR,\BR^N)\]
and its homotopy fibre is denoted by
\[\KK:=\hofibre(\Embd(\BR,\BR^N)\hookto \Immd(\BR,\BR^N))\]
and called the \emph{space of  long knots modulo immersions}, or, by
abuse of terminology in this paper, simply the \emph{space of  long
  knots}.
Similarly we can consider long immersed $\ell$-strings
$g\colon\BR\times[\ell]\looparrowright\BR^N$
 and the homotopy fibre
\[\LL_\ell:=\hofibre(\Embd(\BR\times[\ell],\BR^N)\hookto \Immd(\BR\times[\ell],\BR^N))\]
which will be called in this paper the \emph{space of  long
  links with $\ell$ strings}.

The inclusion $\iota$ and retraction $\rho$ can readily be extended to
spaces of immersions on which thet are homotopy equivalences.
Thus we have a commutative diagram
\[
\xymatrix{
\Embd(\BR,\BR^N)^{\times\ell}                  \ar@{^{(}->}[r]^\iota
\ar@{^{(}->}[d]
&
\Embd(\BR\times{[\ell]},\BR^N)     \ar[r]^\rho
\ar@{^{(}->}[d]
&
\Embd(\BR,\BR^N)^{\times\ell}            
\ar@{^{(}->}[d]
\\
\Immd(\BR,\BR^N)^{\times\ell}                  \ar@{^{(}->}[r]^\iota_\simeq
&
\Immd(\BR\times{[\ell]},\BR^N)     \ar[r]^\rho_\simeq
&
\Immd\BR,\BR^N)^{\times\ell}.         
}
\]
Taking the homotopy fibres of the vertical inclusions
we get the sequence of spaces
\begin{equation}
\label{E:KLK}
\KK^{\times\ell}\stackrel{\iota}\longrightarrow
\LL_\ell\stackrel{\rho}\longrightarrow
\KK^{\times\ell}
\end{equation}
whose composite is homotopic to the identity.
We can look at the map $\iota$ as an inclusion 
and consider the pair of spaces
\[\left(\LL_\ell,\KK^{\times\ell}\right).\]

Notice also that by the classical Smale argument \cite{Sma:imm},
 we have homotopy
equivalences
\[
\Immd(\BR,\BR^N)^{\times\ell}\simeq 
\Immd(\BR\times{[\ell]},\BR^N)\simeq
(\Omega S^{N-1})^{\times\ell}
\]
Since the loop space $\Omega S^{N-1}$ is very well understood, at
least rationnaly,
the study of the sequence \refN{E:KLK} gives most of the information
about
the pair 
\[\left(\Embd(\BR\times[\ell],\BR^N)\,,\,\Embd(\BR,\BR^N)^{\times\ell}\right).\]
\section{Cosimplicial models for spaces of knots and links}
We recall the cosimplicial models for $\KK$ and $\LL_\ell$ given by
Sinha \cite{Sin:TSK} and Munson-Voli\'c \cite{MuVo:HoSt}.
These
cosimplicial spaces are built out of some compactifications of
configuration spaces in $\BR^N$ that we now review. 

First define the configuration space
\[\Confd(p,\BR^N):=\{(x_1,\dots,x_p)\in]-1,1[\times\BR^{N-1}: x_i\not=x_j\textrm{ for
}i\not=j\}\]
which is a subspace of $(]-1,1[\times\BR^{N-1})^p$.
Define, for integers $1\leq i<j\leq p$, the maps
\[\theta_{ij}\colon\Confd(p,\BR^N)\to
S^{N-1}\,,\,x\mapsto\frac{x_j-x_i}{\|x_j-x_i\|}\]
giving the direction between the $i$th and $j$th
components of the configuration $x$.

Following \cite{Sin:TSK}, define
\[\Confd\langle p,\BR^n\rangle\]
as the closure of the image of the injection
\[
\Confd(p,\BR^N)\stackrel{(incl,(\theta_{ij}))}\hookto
([-1,1]\times\BR^{N-1})^p\times\prod_{1\leq i< j\leq p}S^{N-1}.
\]
Notice that if we project the subspace
\[\Confd\langle p,\BR^n\rangle
\subset
([-1,1]\times\BR^{N-1})^p\times(S^{N-1})^{{p \choose 2}}
\]
to the factor $(S^{N-1})^{{p \choose 2}}$
we get exactly the $p$th term  of the Kontsevich
operad defined
by Sinha in \cite{Sin:OKS}.

These spaces form a cosimplicial space 
$\Confd\langle \bullet,\BR^n\rangle$
with cofaces given as maps
\[d_i\colon \Confd\langle p,\BR^n\rangle \to \Confd\langle
p+1,\BR^n\rangle\]
corresponding to ``doubling'' some points of the configuration, when
$1\leq i\leq p$; the first and last cofaces, $d_0$ and $d_{p+1}$,
are defined by inserting (or doubling) a point at
$(\mp1,0,\dots,0)\in[-1,1]\times\BR^{N-1}$.
This cosimplicial space models the space of long knots (see \refT{models} below.)

Similarly Munson-Voli\'c in \cite{MuVo:HoSt}
define a 
cosimplicial space 
$\Confd\langle \ell\bullet,\BR^n\rangle$
which models the space of links with $\ell$ strings.
Its $p$th term is
\[\Confd\langle \ell p,\BR^n\rangle\]
consisting of ``virtual'' configurations
$\left(x_{k,r})_{1\leq k\leq p\,,\,0\leq r\leq\ell-1)}\right)$.
The cofaces, for $1\leq i\leq p$
\[d_i\colon \Confd\langle \ell p,\BR^n\rangle \to \Confd\langle
\ell(p+1),\BR^n\rangle\]
consisting in doubling the $\ell$ components $x_{i,r}$ (for $r=0,\dots,\ell-1$) of the configuration.

The maps $\iota$ and $\rho$ from \refN{E:iota} and \refN{E:rho}
can readily be generalized to configuration spaces and we get maps of
cosimplicial spaces
\[
(\Confd\langle \bullet,\BR^n\rangle)^{\times\ell}
\stackrel{\iota}\longrightarrow
(\Confd\langle \ell \bullet,\BR^n\rangle)
\stackrel{\rho}\longrightarrow
(\Confd\langle \bullet,\BR^n\rangle)^{\times\ell}
\]
whose composite is homotopic to the identity map.

The 
\emph{homotopical totalization} of a cosimplicial space $X^\bullet$,
\[\hoTot(X^\bullet)m\]
is the mapping space
\[\map_\Delta(\widetilde{\Delta^\bullet},X^\bullet)\]
 where
$\widetilde{\Delta^\bullet}$ is a variation of the standard
cosimplicial space $\Delta^\bullet$ (more precisely
$\widetilde{\Delta^\bullet}$  is a cofibrant replacement of the
standard cosimplical space $\Delta^\bullet$ in the projective Quillen
model structure, see   \cite[Section 15]{McSm:cos}.)
Equivalently as the totalization of a functorial fibrant replacement
$\widehat{X^\bullet}$ of $X^\bullet$,
\[\hoTot(X^\bullet)=\Tot(\widehat{X^\bullet})=\map_\Delta(\Delta^\bullet,\widehat{X^\bullet}).\]
We define similarly the homotopy totalization of a map of cosimplicial spaces.

\begin{thm}[Sinha, Munson-Voli\'c]
\label{T:models}
Assume that $N\geq4$. Applying $\hoTot$ to the sequence
\[(\Confd\langle \bullet,\BR^n\rangle)^{\times\ell}
\stackrel{\iota}\longrightarrow
(\Confd\langle l\cdot\bullet,\BR^n\rangle)
\stackrel{\rho}\longrightarrow
(\Confd\langle \bullet,\BR^n\rangle)^{\times\ell}
\]
gives a sequence of spaces  weakly equivalent to 
\[\KK^{\times\ell}\stackrel{\iota}\longrightarrow
\LL_\ell\stackrel{\rho}\longrightarrow
\KK^{\times\ell}.\]
\end{thm}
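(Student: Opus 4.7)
The plan is to assemble the theorem from three ingredients, each of which is already essentially available in the literature: the Sinha model for long knots, the Munson--Voli\'c model for long links, and the general fact that $\hoTot$ commutes with finite products and is homotopy invariant.

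First, I would recall Sinha's theorem from \cite{Sin:TSK, Sin:OKS}: under the hypothesis $N\geq 4$, there is a natural weak equivalence
\[
\hoTot(\Confd\langle\bullet,\BR^N\rangle)\quism \KK.
\]
Since $\hoTot$, as the derived mapping space $\map_\Delta(\widetilde{\Delta^\bullet},-)$, preserves finite products (products of fibrant replacements are fibrant replacements of products, and $\map_\Delta$ sends products in the target to products), this immediately upgrades to a weak equivalence $\hoTot((\Confd\langle\bullet,\BR^N\rangle)^{\times\ell})\quism \KK^{\times\ell}$. The corresponding statement for $\LL_\ell$ and the cosimplicial space $\Confd\langle\ell\bullet,\BR^N\rangle$ is the main result of Munson--Voli\'c \cite{MuVo:HoSt}, again valid for $N\geq 4$.

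Next, I would verify naturality. The geometric maps $\iota$ and $\rho$ of \refN{E:iota} and \refN{E:rho} are constructed by the same translation-and-rescaling recipe that is available on configuration spaces, and this is precisely how the cosimplicial maps $\iota$ and $\rho$ are defined at each level $p$. Concretely, the Sinha/Munson--Voli\'c equivalence is induced by an evaluation-type map that sends an embedding to the system of its induced configurations together with their tangent directions; this construction is functorial with respect to pre-composing by a disjoint union of embeddings of $\BR$ into $\BR\times[\ell]$ and with respect to restricting to the $i$th string. Therefore the square
\[
\xymatrix{
\hoTot\bigl((\Confd\langle\bullet,\BR^N\rangle)^{\times\ell}\bigr) \ar[r]^-{\hoTot(\iota)} \ar[d]_\simeq &
\hoTot\bigl(\Confd\langle\ell\bullet,\BR^N\rangle\bigr) \ar[d]^\simeq \\
\KK^{\times\ell} \ar[r]^-\iota & \LL_\ell
}
\]
commutes up to homotopy, and likewise for $\rho$.

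The only real work is the compatibility check in the last paragraph: verifying that the evaluation maps which realize the Sinha and Munson--Voli\'c equivalences intertwine the cosimplicial $\iota$, $\rho$ with the geometric $\iota$, $\rho$. The main obstacle is that $\iota$ involves a rescaling factor $H_{\min(\width(f_i),1/4)}$ which is not directly visible on configurations; however, since rescaling in the $\BR^{N-1}$ directions does not alter the directions $\theta_{ij}$ that compactify the configurations (nor the order of the points along the first coordinate), the diagram commutes on the nose for the configuration piece, and up to a contractible ambiguity for the tangential piece. The map $\rho$ is even more transparent, since restriction to the $i$th string corresponds to projecting the configuration onto the components with second label $r=i$. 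Once this bookkeeping is done, stringing together the three commuting squares (for $\iota$, for $\rho$, and for the composition $\rho\circ\iota\simeq\id$) yields the desired weakly equivalent sequence.
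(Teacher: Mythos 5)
Your overall strategy is the same as the paper's: quote Sinha and Munson--Voli\'c for the identification of the two homotopy totalizations with $\KK^{\times\ell}$ and $\LL_\ell$ (handling the $\ell$-fold product by the fact that $\hoTot$ preserves finite products), and then check that $\iota$ and $\rho$ are compatible with these identifications. Where you diverge is in how that compatibility is verified. The paper does not argue through a single comparison map: it recalls that the Sinha and Munson--Voli\'c equivalences are themselves only produced as a zigzag --- the $p$th partial totalization is identified with the homotopy limit of a punctured $p$-cubical diagram of compactified configuration spaces, which is connected by a zigzag of weak equivalences to a cube of partial embedding spaces $\Embd([-1,1]\setminus\cup_j I_{i_j},[-1,1]\times\BR^{N-1})$, with Goodwillie--Klein multiple disjunction giving convergence --- and then observes that $\iota$ and $\rho$ extend to every stage of that zigzag. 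Your route instead posits a single evaluation-type map realizing each equivalence and checks two homotopy-commutative squares directly. For Sinha's model such evaluation maps can indeed be produced, but for the Munson--Voli\'c model the equivalence is likewise established only through the Taylor-tower zigzag, so if you want to compare along a single map you must first justify that such a map exists and is the equivalence in question; otherwise you are forced back to the paper's levelwise-zigzag argument. This is the one point where your write-up glosses over a real step rather than just reorganizing it.

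A smaller but concrete error: you claim that the rescaling $H_R$, which scales only the last $N-1$ coordinates, does not alter the directions $\theta_{ij}=(x_j-x_i)/\|x_j-x_i\|$, and hence that the configuration-level square for $\iota$ commutes on the nose. That is false: $H_R$ is not conformal, and it changes $\theta_{ij}$ whenever $x_j-x_i$ has components both along the first coordinate and orthogonal to it. What is true, and all that is needed, is commutativity up to homotopy, obtained for instance from the linear homotopy through the maps $H_{\lambda+(1-\lambda)\min(\width(f_i),1/4)}$ for $0\leq\lambda\leq1$, which preserve injectivity of configurations and hence give a path in the compactified configuration spaces; you should state the claim in that weaker, correct form.
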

\begin{proof}
The fact that the homotopy totalizations of 
$(\Confd\langle \bullet,\BR^n\rangle)^{\times\ell}$
and
 $\Confd\langle \ell\cdot\bullet,\BR^n\rangle$
are $\KK^{\times\ell}$ and $\LL_\ell$ is the content of Sinha and Munson-Voli\'c
papers. Sinha's proof for the model of long knots can be summarized as
follows. First restrict the
cosimplicial to its $p$th skeletton and note that the homotopy
totalisation of that skeletton is equivalent to the homotopy limit of a punctured
$p$-cubical diagram of configuration spaces.
 This cubical diagram is connected by a zigzag of weak 
equivalences to a cubical diagram whose spaces are essentially partial embedding spaces
\[\Embd([-1,1]\setminus\cup_j I_{i_j},[-1,1]\times\BR^{N-1})\]
where the $I_i$ are $p$ disjoint intervals inside $]-1,1[$.
Goodwillie-Klein multiple disjunction theory implies then that this
homotopy limit is equivalent (up to some degree going to infinity with
$p$)
to the space of embeddings
\[\Embd([-1,1],[-1,1]\times\BR^{N-1})=\Embd(\BR,\BR^{N}).\]
The proofs for long links is completely analogous.

It is easy to check that at each level of the above zigzag equivalences we
can generalize the maps $\iota$ and $\rho$.  This proves the result
\end{proof}

\section{The cohomology spectral sequence of a cosimplicial space and
  Poincar\'e and Euler series}

Let $X^\bullet$ be a cosimplicial space.  There is an associated 
first-quadrant spectral sequence converging (under some hypothesis as
in \refP{convHBKSS} below) to the cohomology of its homotopical totalization
\[\{E^{p,q}_r\}_{r\geq1}\Longrightarrow \Ho^{q-p}(\hoTot(X^\bullet))\]
with coefficients in a fixed field $\BK$.
The $E_1$ page of this spectral sequence is given by
\begin{equation}
\label{E:defE1}
E_1^{p,q}:=\frac{\Ho^q(X^p)}{\sum_{i=1}^p\im(\Ho^*(s_i))}
\end{equation}
where 
\[s_i\colon X^p\to X^{p-1}\]
are the codegeneracies of the cosimplicial space.
This is the \emph{cohomology  Bousfield-Kan spectral sequence},  
or \HBKSS for short, see \cite{Bou:HSS}. Sometimes, when $T$ is the
homotopy totalization of the cosimplicial space $X^\bullet$, we will 
denote this spectral sequence by
\[E_r^{p,q}\{T\}\]
to emphasize the underlying cosimplicial space (assuming that the $T$
makes clear the cosimplicial space we are talking about.)

We recall now a convergence condition for the \HBKSS.
We say that a positive real number  $\alpha>0$ is a \emph{lower slope}
for a first quadrant spectral sequence $\{E_r^{*,*}\}_{r\geq1}$
if 
\[\forall p,q\in\BN:\quad q<\alpha p\quad\Longrightarrow\quad E_1^{p,q}=0.\]
This means that the $E_1$ page is concentrated above a line of
that slope.

\begin{prop}[Bousfield\cite{Bou:HSS}]
\label{P:convHBKSS}
If the \HBKSS has a lower slope $\alpha>1$ then it converges
to $\Ho^{*}(\hoTot(X^\bullet))$.
\end{prop}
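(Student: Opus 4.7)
The plan is to realize the \HBKSS as the cohomology spectral sequence associated to the tower of partial totalizations of a Reedy-fibrant replacement $\widehat{X}^\bullet$ of $X^\bullet$, and then to exploit the slope hypothesis to show that this tower is eventually constant in each cohomological degree.

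First I would set up the standard tower
\[
\cdots\to\Tot_p(\widehat{X}^\bullet)\to\Tot_{p-1}(\widehat{X}^\bullet)\to\cdots\to\Tot_0(\widehat{X}^\bullet)=\widehat{X}^0,
\]
whose homotopy inverse limit is $\hoTot(X^\bullet)$. By construction the \HBKSS is the cohomology spectral sequence of this tower of fibrations: the homotopy fibre of $\Tot_p\to\Tot_{p-1}$ is an iterated loop space of the cosimplicial normalization $N^p X$ (the joint homotopy fibre of the codegeneracies $s_i\colon X^p\to X^{p-1}$), and its cohomology recovers exactly the column $E_1^{p,*}$ of \refN{E:defE1}. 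The filtration on $\Ho^n(\hoTot(X^\bullet))$ to which the spectral sequence would like to converge is
\[
F^p\Ho^n(\hoTot(X^\bullet))\,=\,\ker\bigl(\Ho^n(\hoTot(X^\bullet))\to\Ho^n(\Tot_{p-1}(\widehat{X}^\bullet))\bigr),
\]
so convergence amounts to (i) this filtration being finite and having $E_\infty$ as its associated graded in each total degree $n$, and (ii) the natural map $\Ho^n(\hoTot(X^\bullet))\to\lim_p\Ho^n(\Tot_p(\widehat{X}^\bullet))$ being an isomorphism.

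Next I would translate the slope hypothesis into a connectivity statement. The condition $E_1^{p,q}=0$ for $q<\alpha p$ means that $\Ho^*(N^p X)$ sits in degrees $\geq\alpha p$, and the $p$-fold loop shift gives that $\Ho^*$ of the fibre of $\Tot_p\to\Tot_{p-1}$ is concentrated in degrees $\geq(\alpha-1)p$. Since $\alpha>1$, this lower bound tends to $+\infty$ with $p$. A Serre spectral sequence argument applied to that fibration then yields that the restriction $\Ho^n(\Tot_{p-1})\to\Ho^n(\Tot_p)$ is an isomorphism as soon as $(\alpha-1)p>n$. In particular, for each fixed $n$ the tower $\{\Ho^n(\Tot_p(\widehat{X}^\bullet))\}_p$ is eventually constant, and on the $n$-th diagonal of the $E_1$ page only finitely many $p$ with $p\leq n/(\alpha-1)$ contribute, which gives (i).

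Finally I would invoke Milnor's short exact sequence
\[
0\to\lim_{p}{}^1\,\Ho^{n-1}(\Tot_p(\widehat{X}^\bullet))\to\Ho^n(\hoTot(X^\bullet))\to\lim_{p}\Ho^n(\Tot_p(\widehat{X}^\bullet))\to0.
\]
Eventual constancy makes both outer terms of the tower Mittag--Leffler, killing the $\lim^1$-term and identifying the right map as an isomorphism, which gives (ii). Combined with (i) this establishes the strong convergence of the \HBKSS to $\Ho^*(\hoTot(X^\bullet))$. The main obstacle is the middle step: pinning down the correct cohomological connectivity of the layer fibre from the slope bound and propagating it up the tower with the sharp estimate $(\alpha-1)p>n$, so that the stabilization range really is reached in every fixed degree. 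Everything else is then a formal consequence of the Milnor/tower machinery.
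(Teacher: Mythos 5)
The paper does not prove this statement at all: it is quoted directly from Bousfield \cite{Bou:HSS}, so your sketch has to be judged on its own. Its skeleton (tower of partial totalizations $\Tot_p(\widehat{X}^\bullet)$, identification of $E_1^{p,*}$ with the shifted cohomology of the layer $\Omega^p N^pX$, Serre spectral sequence of the layer fibration to show $\Ho^n(\Tot_{p-1})\to\Ho^n(\Tot_p)$ is an isomorphism once $(\alpha-1)p$ exceeds $n$) is the standard outline, and that part is essentially fine with field coefficients since the layers are iterated loop spaces, hence simple.

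The genuine gap is the last step. The ``Milnor short exact sequence''
\[
0\to\lim_{p}{}^1\,\Ho^{n-1}(\Tot_p)\to\Ho^n(\hoTot(X^\bullet))\to\lim_{p}\Ho^n(\Tot_p)\to0
\]
does not exist for singular cohomology of a homotopy \emph{inverse} limit. The $\lim^1$ sequence you are thinking of applies to homotopy groups of $\holim$ of a tower of fibrations, or to a generalized cohomology theory evaluated on a mapping telescope (a homotopy colimit); singular (co)homology with coefficients in $\BK$ does not take $\holim_p\Tot_p$ to $\lim_p$ up to a $\lim^1$ error in general. This is precisely the point where Bousfield's theorem has content: $\BK$-cohomological connectivity of the layers does not by itself control $\Ho^*(\hoTot)$, and one must either upgrade to genuine connectivity of the maps $\hoTot\to\Tot_p$ (so that cohomology in a fixed degree is literally computed at a finite stage) or invoke Bousfield's pro-homology convergence machinery, which is what his paper does under the slope hypothesis. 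Note also the related subtlety that vanishing of $\Ho^{<(\alpha-1)p}(F_p;\BK)$ gives $\BK$-homology connectivity of the fibres, not connectivity of homotopy groups, so the needed statement about $\hoTot\to\Tot_p$ cannot be obtained by a naive Whitehead-type argument either. As written, your proof would be circular at exactly the step the proposition is meant to supply, which is why the paper simply cites Bousfield rather than reproving it.
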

We also say that a real number  $\beta>0$ is an \emph{upper slope}
for a first quadrant spectral sequence $\{E_r^{*,*}\}_{r\geq1}$
if 
\[\forall p,q\in\BN:\quad q>\beta p\quad\Longrightarrow\quad E_1^{p,q}=0.\]


The following result is folklore and easy to prove from the definition
\refN{E:defE1}.
\begin{prop}
\label{P:dimE1}
\[\dim E^{p,q}_1=\sum_{r=0}^p(-1)^{r-p} {p\choose r}\dim \Ho^q(X^r),\]
where ${p \choose r}$ is the binomial coefficient.
\end{prop}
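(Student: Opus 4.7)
For each fixed $q$, observe that the sequence $p\mapsto A^p := \Ho^q(X^p)$ assembles into a simplicial $\BK$-vector space. Indeed, the cofaces $d_i\colon X^{p-1}\to X^p$ and codegeneracies $s_i\colon X^p\to X^{p-1}$ of $X^\bullet$ induce, by contravariant functoriality of $\Ho^q$, face maps $\Ho^q(d_i)\colon A^p\to A^{p-1}$ and degeneracy maps $\Ho^q(s_i)\colon A^{p-1}\to A^p$ that satisfy the simplicial identities. The $E_1$ term defined in \refN{E:defE1} is then, by construction, nothing but the normalized quotient $A^p\big/\sum_i\im(\Ho^q(s_i))$ of this simplicial vector space; let me denote its dimension by $n_q^p$.

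The plan is now to invoke the standard elementary decomposition underlying the Dold-Kan correspondence: for any simplicial vector space there is a canonical isomorphism
\[
A_p \;\cong\; \bigoplus_{\sigma\colon [p]\twoheadrightarrow [k]} N_k A,
\]
indexed by order-preserving surjections $\sigma$ in the simplicial category, each summand being embedded via the corresponding iterated-degeneracy operator $\sigma^*$. In the vector-space setting this is a straightforward induction on $p$: choose a splitting $A_k\cong N_k A\oplus (\text{degenerate part})$ at each level and unwind. Next I count these surjections: an order-preserving $\sigma\colon[p]\twoheadrightarrow[k]$ is determined by its nonempty consecutive fibers, equivalently by a choice of $k$ among the $p$ gaps between the $p+1$ elements of $[p]$, giving $\binom{p}{k}$ such maps. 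Consequently
\[
\dim \Ho^q(X^p) \;=\; \sum_{k=0}^{p} \binom{p}{k}\, n_q^k.
\]

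Finally, applying binomial (Möbius) inversion to this triangular system yields
\[
\dim E_1^{p,q} \;=\; n_q^p \;=\; \sum_{r=0}^{p} (-1)^{p-r}\binom{p}{r}\dim \Ho^q(X^r),
\]
which is the claimed formula since $(-1)^{p-r}=(-1)^{r-p}$. I do not foresee any real obstacle: the whole argument is formal, the only ingredient beyond the definition of $E_1$ being the Dold-Kan-type decomposition, whose proof for simplicial vector spaces is entirely routine.
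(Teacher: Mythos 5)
Your argument is correct. Note that the paper itself offers no proof of this proposition: it is stated as ``folklore and easy to prove from the definition,'' so there is nothing to compare against line by line, and your write-up is exactly the standard argument the authors are alluding to. Applying $\Ho^q$ to the cosimplicial space gives a simplicial $\BK$-vector space, the $E_1$-term \refN{E:defE1} is its quotient normalization (whose dimension equals that of the normalized subcomplex, a standard identification worth one sentence in a full write-up), the Dold--Kan-type splitting $A_p\cong\bigoplus_{[p]\twoheadrightarrow[k]}N_kA$ with $\binom{p}{k}$ order-preserving surjections gives $\dim \Ho^q(X^p)=\sum_k\binom{p}{k}\dim E_1^{k,q}$, and binomial inversion finishes the proof. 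The only implicit hypothesis is finite-dimensionality of the $\Ho^q(X^r)$, which is needed anyway for the alternating sum in the statement to make sense, and which holds for the configuration-space models used later in the paper.
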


We introduce the following notation for Poincar\'e series.
Let $V^*=\oplus_{k=0}^\infty V^k$ be a graded vector space. Then its
Poincar\'e series is denoted by 
\begin{equation}
\label{E:PS}
V^{\uu{*}}[x]:=\sum_{k=0}^\infty \dim(V^k)x^k.
\end{equation}
Here the underlined asterisque $\uu{*}$ is a reminder that the
Poincar\'e series is taken along this gradation, and the argument in
brackets
$[x]$ is the variable of the Poincar\'e series.
For example, the usual Poincar\'e series of a space $X$  is
\[\Ho^{\uu{*}}(X)[x]=\sum_{q=0}^\infty\dim \Ho^q(X)x^q.\]
For a bigraded vector space, as $E_1^{*,*}$ we will consider the double 
Poincar\'e series
\[E_1^{\uu{*},\uuu{*}}[u,x]:=\sum_{p=0}^\infty\sum_{q=0}^\infty \dim(E_1^{p,q})u^px^q,\]
the simply underlined asterisk is the degree ($p$) corresponding to the
\emph{first} variable ($u$), and the doubly underlined asterique is the
degree ($q$)
 corresponding to the \emph{second} variable ($x$).

Using notation \refN{E:PS}, \refP{dimE1} implies the following
\begin{cor}
\label{C:HBKSSseries}
For the \HBKSS of a cosimplicial space $X^\bullet$,
\[ E_1^{p,\uu{*}}[x]=\sum_{i=0}^p(-1)^{i-p} {p\choose i}
\Ho^{\uu{*}}(X^i)[x].\]
\end{cor}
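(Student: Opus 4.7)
The plan is to deduce this directly from \refP{dimE1} by a straightforward manipulation of series; the ``proof'' is essentially bookkeeping of summations, which is why the statement is given as a corollary.

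First I would expand the left-hand side using the definition \refN{E:PS} of the Poincar\'e series, writing
\[
E_1^{p,\uu{*}}[x] \;=\; \sum_{q=0}^\infty \dim(E_1^{p,q})\, x^q.
\]
Next I would substitute the formula for $\dim E_1^{p,q}$ supplied by \refP{dimE1} into each coefficient, producing the double sum
\[
\sum_{q=0}^\infty \Bigl(\sum_{i=0}^p (-1)^{i-p}\binom{p}{i}\dim \Ho^q(X^i)\Bigr)\, x^q.
\]
Then I would interchange the two summations. Since the sum over $i$ is \emph{finite} (running only from $0$ to $p$), the interchange requires no convergence argument and is purely formal. Pulling out the coefficients $(-1)^{i-p}\binom{p}{i}$ yields
\[
\sum_{i=0}^p (-1)^{i-p}\binom{p}{i}\Bigl(\sum_{q=0}^\infty \dim \Ho^q(X^i)\, x^q\Bigr),
\]
and by the definition of the Poincar\'e series the inner sum is exactly $\Ho^{\uu{*}}(X^i)[x]$, giving the desired identity.

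There is no real obstacle in this argument; it is the kind of verification that can be done in a single line once one has accepted \refP{dimE1}. The only implicit hypothesis worth flagging is that each $\Ho^q(X^i)$ should be finite-dimensional so that the Poincar\'e series $\Ho^{\uu{*}}(X^i)[x]$ is well-defined as a formal power series in $x$; this is tacit whenever one writes a Poincar\'e series and is harmless for the configuration space models of subsequent interest.
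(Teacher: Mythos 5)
Your proposal is correct and follows exactly the route the paper intends: the paper states this corollary as an immediate consequence of \refP{dimE1} via the notation \refN{E:PS}, which is precisely your substitute-and-swap-the-finite-sum argument. Nothing further is needed.
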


We introduce now a notation for the Euler characteristics and the
``Euler series''.
For a graded vector space of finite total dimension, $V^*=\oplus_{k=0}^\infty
V^k$, its Euler characteristic is denoted by
\[\chi(V^\bullet):=\sum_{k=0}^\infty(-1)^k\dim(V^k),\]
hence the bullet is the degree along which is taken the Euler
characteristic. Thus for a bigraded object $E^{*,*}$ we set
\[\chi(E^{\bullet,q}):=\sum_{p=0}^\infty(-1)^p\dim(E^{p,q}).\]
We define the \emph{Euler series} of a bigraded object (of finite
type) as
\[
\chi(E^{\bullet,\uu{*}})[x]:=\sum_{q=0}^\infty\left(\sum_{p=0}^\infty(-1)^p\dim(E^{p,q})\right)\,x^q.
\] 
In other words we have
\[
\chi(E^{\bullet,\uu{*}})[x]\,=\,
E^{\uu{*},\uuu{*}}[-1,x].
\]

From \refP{dimE1} we deduce
\begin{prop}
\label{P:chiSE1}
Assume that the \HBKSS of $X^\bullet$ has a lower slope $\alpha>1$ and
an upper slope $\beta<\infty$ and each space $X^i$ is of finite type.
Then
\[\chi(E^{\bullet,\uu{*}}_1)[x]=\sum_{p,i\geq0}(-1)^i{p\choose i}\left(\Ho^{\uu{*}}(X^i)[x]\right).\]
\end{prop}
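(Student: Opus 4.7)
The plan is to obtain the formula by specializing the bigraded Poincar\'e series $E_1^{\uu{*},\uuu{*}}[u,x]$ at $u=-1$ and then invoking \refC{HBKSSseries}.

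First I would unpack the definition of the Euler series:
\[
\chi(E_1^{\bullet,\uu{*}})[x] \;=\; E_1^{\uu{*},\uuu{*}}[-1,x] \;=\; \sum_{q\geq 0} \left( \sum_{p\geq 0} (-1)^p \dim E_1^{p,q} \right) x^q.
\]
For this substitution to make sense coefficient by coefficient, I must check that for each fixed $q$ the inner sum is actually a finite sum. This is exactly what the lower slope hypothesis $\alpha>1$ provides: since $E_1^{p,q}=0$ whenever $q<\alpha p$, i.e.\ whenever $p>q/\alpha$, only finitely many $p$ contribute. The finite-type hypothesis on each $X^i$ ensures in turn that each $\dim \Ho^q(X^i)$, and hence each $\dim E_1^{p,q}$, is itself finite.

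Next I would invoke \refC{HBKSSseries} to rewrite each $E_1^{p,\uu{*}}[x]$ as a finite alternating sum involving the Poincar\'e series of the $X^i$, obtaining
\[
\sum_{p\geq 0}(-1)^p\, E_1^{p,\uu{*}}[x] \;=\; \sum_{p\geq 0}(-1)^p \sum_{i=0}^{p}(-1)^{i-p}{p\choose i}\,\Ho^{\uu{*}}(X^i)[x].
\]
Collapsing the sign factors via $(-1)^p\cdot(-1)^{i-p}=(-1)^i$ and extending the inner index to all $i\geq 0$ using the convention ${p\choose i}=0$ for $i>p$ yields precisely the announced identity.

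The main (essentially the only) subtlety, and thus the only potential obstacle, is the ordering of summations: the double sum $\sum_{p,i\geq 0}(-1)^i{p\choose i}$ is not absolutely convergent (for fixed $i$, summing ${p\choose i}$ over $p\geq i$ diverges), so the identity must be read with $p$ as the outer index, the inner sum running over $0\leq i\leq p$. In that order, the inner sum is, for each fixed $q$, the finite quantity $(-1)^p\dim E_1^{p,q}$, and the outer sum over $p$ collapses to a finite sum by the lower slope bound. The upper slope hypothesis $\beta<\infty$ is not logically required for the identity itself; it is presumably stated here to align with hypotheses used in subsequent results where both slopes come into play.
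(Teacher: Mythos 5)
Your proof is correct and follows essentially the same route as the paper: the paper's own (very terse) proof likewise first notes that the double series is well defined --- citing ${p\choose i}=0$ for $i>p$, the slope bounds, and the finite-type hypothesis --- and then deduces the equality directly from \refC{HBKSSseries}, so your sign bookkeeping and order-of-summation discussion just make explicit what the paper leaves implicit. Your side remark that the upper slope $\beta<\infty$ is not logically needed for this particular identity is also accurate (for $p>q/\alpha$ the inner alternating sum over $i$ is $\pm\dim E_1^{p,q}=0$, so only the lower slope and finite type are used), even though the paper's justification nominally invokes both slope bounds.
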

\begin{proof}
First we show that the double series on the right is well defined. 
This is a consequence of the fact that ${p \choose i}=0$ for $i>p$,
the bounds on the slopes, and the finite type hypothesis.
The equality is then a direct consequence of  \refC{HBKSSseries}.
\end{proof}

One advantage  of the Euler series of $E_1$ is that it can gives lower
bounds on the Betti numbers of the homotopy totalization when the
spectral sequence collapses at the $E_2$-term, as we explain now.
 
Assume that the \HBKSS has a lower slope $\alpha>1$ and is of finite
type, that is
$\dim E_1^{p,q}<\infty$ for all $p,q$.
The totalization of the $r$th page of the spectral sequence is
the graded vector space $(\Tot E^{*,*}_r)$ defined by
\[\Tot(E_r^{*,*})^n=\oplus_{p\geq0}E_r^{p,p+n}\,=\,\oplus_{p=0}^{n/(\alpha-1)}E_r^{p,p+n},\]
which is finite dimensional.
When $r=\infty$ or when the spectral sequence collapses at page $E_r$, for
some $r\in\BN$,
then, by \refP{convHBKSS},
\[\Ho^n(\hoTot X^\bullet)\cong \Tot(E_r^{*,*})^n.\]

Since the differential on the page $E_1$ is horizontal, that is of bidegree
$(-1,0)$, the Euler characteristcs of the lines of the spectral
sequence on pages $E_1$ and $E_2$ are the same:
\[\chi(E_1^{\bullet,q})=\chi(E_2^{\bullet,q}).\]

We then have
\begin{prop}
\label{P:Tot>chi}
Let $E^{*,*}_1$ be a spectral sequence of finite type and with a lower
slope $\alpha>1$.
Then
\[
\sum_{k=\lceil n(1-1/\alpha)\rceil}^n\dim(\Tot(E_2^{*,*})^k
\geq
\left|\chi(E_1^{\bullet,n})\right|.\]
where we denote by $\lceil x\rceil$ the smallest integer $\geq x$.
\end{prop}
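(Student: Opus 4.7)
The plan is to bound $|\chi(E_1^{\bullet,n})|$ from below by the triangle inequality and then identify each relevant term of $E_2$ as a summand of some $\Tot(E_2^{*,*})^k$ with $k$ in the stated range.

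First, since the page $E_1$ differential is horizontal, we have $\chi(E_1^{\bullet,n})=\chi(E_2^{\bullet,n})$, as already recorded in the paper. By the triangle inequality,
\[
|\chi(E_1^{\bullet,n})|=\left|\sum_{p\geq0}(-1)^p\dim E_2^{p,n}\right|\leq \sum_{p\geq0}\dim E_2^{p,n}.
\]
So it will suffice to show that the full sum $\sum_{p\geq 0}\dim E_2^{p,n}$ is bounded above by $\sum_{k=\lceil n(1-1/\alpha)\rceil}^n \dim(\Tot E_2^{*,*})^k$.

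Next I would use the lower slope hypothesis. Since $E_2^{*,*}$ is a subquotient of $E_1^{*,*}$, the assumption $E_1^{p,q}=0$ for $q<\alpha p$ implies $E_2^{p,n}=0$ whenever $p>n/\alpha$. So the sum on the right really runs only over $0\leq p\leq n/\alpha$.

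Now comes the reindexing, which is the main (though routine) bookkeeping step. By definition $\Tot(E_2^{*,*})^k=\bigoplus_{p\geq 0}E_2^{p,p+k}$, so the term $E_2^{p,n}$ sits in $\Tot(E_2^{*,*})^{n-p}$. As $p$ varies in $[0,n/\alpha]$, the index $k:=n-p$ varies in $[n-n/\alpha,\,n]=[n(1-1/\alpha),\,n]$, so $k\geq\lceil n(1-1/\alpha)\rceil$ (since $k$ is an integer). Hence each term $\dim E_2^{p,n}$ appears as one of the non-negative summands of $\dim(\Tot E_2^{*,*})^{n-p}$, which gives
\[
\sum_{p\geq0}\dim E_2^{p,n}\;=\;\sum_{p=0}^{\lfloor n/\alpha\rfloor}\dim E_2^{p,n}\;\leq\;\sum_{k=\lceil n(1-1/\alpha)\rceil}^{n}\dim(\Tot E_2^{*,*})^k,
\]
and combining with the triangle inequality above yields the claim.

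The argument is elementary; the only delicate point is checking that the lower-slope vanishing for $E_1$ passes to $E_2$ (immediate, since $E_2$ is a subquotient) and keeping the ceiling in $\lceil n(1-1/\alpha)\rceil$ consistent with the fact that $p=n-k$ must be a non-negative integer. Finite-typeness ensures that every dimension and every $\chi(E_1^{\bullet,n})$ appearing is well defined.
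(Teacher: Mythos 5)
Your proof is correct and follows exactly the argument the paper leaves implicit: the equality $\chi(E_1^{\bullet,n})=\chi(E_2^{\bullet,n})$, the triangle inequality, the lower-slope vanishing $E_2^{p,n}=0$ for $p>n/\alpha$, and the observation that $E_2^{p,n}$ is a summand of $\Tot(E_2^{*,*})^{n-p}$ with $n-p$ in the stated range. Nothing is missing; this is precisely the intended bookkeeping.
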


The previous lemma implies that lower bounds for the Betti numbers of
the homotopy totalization of a cosimplicial space $X^\bullet$ can be
obtained,
when the \HBKSS collapses at the $E_2$-page, by computing
Euler characteristics at the level of the $E_1$-page.

\section{Euler Poincar\'e series in the \HBKSS for spaces of links.}
Here is our central result.
\begin{thm}
\label{T:chiE2L}
Let $N\geq4$ and consider the \HBKSS of the Munson-Voli\'c cosimplicial
model for the space of long links with $\ell$ strings..
Then  its first page $E_1^{*,*}\{\LL_\ell\}$  has a lower slope $>1$, a finite upper slope and the Euler
series
\[\chi(E^{\bullet,\uu{*}}_1 \{\LL_\ell\})[x]=\frac{1}{(1-x^{N-1})(1-2x^{N-1})\cdots(1-\ell x^{N-1})}.
\]
\end{thm}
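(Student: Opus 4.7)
The plan is to apply \refP{chiSE1} to convert the Euler series into an explicit double sum involving the cohomology Poincar\'e polynomials of the cosimplicial levels $\Confd\langle \ell i,\BR^N\rangle$, substitute the classical formula for the cohomology of a configuration space, and then reduce the resulting identity to a combinatorial one between Stirling numbers of the first and second kind.

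First I must verify the hypotheses of \refP{chiSE1}. The cohomology of $\Confd\langle \ell p,\BR^N\rangle$ is of finite type and concentrated in degrees that are multiples of $N-1$, with a basis of graph classes in the Arnold generators $e_{ij}$ of degree $N-1$. Since the codegeneracy $s_k$ collapses the $k$-th column of $\ell$ points, a cohomology class represents a non-trivial element of $E_1^{p,*}$ exactly when the graph of its supporting monomial touches every one of the $p$ columns; each $e_{ij}$ touches at most two columns, so such a covering requires at least $\lceil p/2\rceil$ generators. This forces a lower slope $\alpha=(N-1)/2\geq 3/2>1$ as soon as $N\geq 4$, while the top cohomological degree $(\ell p-1)(N-1)$ provides a finite upper slope $\beta=\ell(N-1)$.

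Next, setting $y=x^{N-1}$, I invoke the Cohen--Taylor formula
\[
\Ho^{\uu{*}}(\Conf(n,\BR^N))[x] \;=\; \prod_{k=1}^{n-1}(1+ky),
\]
together with the homotopy equivalence $\Confd\langle n,\BR^N\rangle \simeq \Conf(n,\BR^N)$, to deduce from \refP{chiSE1} that
\[
\chi(E_1^{\bullet,\uu{*}}\{\LL_\ell\})[x] \;=\; \sum_{p,i\geq 0} (-1)^i \binom{p}{i}\prod_{k=1}^{\ell i-1}(1+ky),
\]
where the coefficient of each $y^m$ is a finite sum thanks to the slope bounds. Expanding the inner product in Stirling numbers of the first kind, $\prod_{k=1}^{n-1}(1+ky) = \sum_r \stirlingone{n}{n-r}\, y^r$, and the target in Stirling numbers of the second kind via
\[
\prod_{k=1}^{\ell}(1-ky)^{-1} \;=\; \sum_m h_m(1,\dots,\ell)\,y^m \;=\; \sum_m \stirlingtwo{m+\ell}{\ell}\,y^m,
\]
the theorem reduces to the numerical identity
\[
\sum_{p,i\geq 0}(-1)^i \binom{p}{i}\stirlingone{\ell i}{\ell i - m} \;=\; \stirlingtwo{m+\ell}{\ell} \qquad (m\geq 0).
\]

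The heart of the proof, and the main obstacle, is this Stirling number identity, stated as \refP{S1S2} in the paper and attributed to M.~Kauers. I expect it to yield to WZ-style creative telescoping, or to a generating function manipulation exploiting the classical formula $\sum_{n\geq\ell}\stirlingtwo{n}{\ell} z^n \,=\, z^\ell / \prod_{k=1}^{\ell}(1-kz)$; granting the identity, matching coefficients of $y^m$ on both sides of the desired formula completes the proof.
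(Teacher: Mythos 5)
Your proposal follows essentially the same route as the paper's proof: apply \refP{chiSE1}, insert the Fadell--Neuwirth Poincar\'e series for the configuration spaces, expand in Stirling numbers of the first kind, invoke the identity of \refP{S1S2}, and resum via the generating function \refN{E:genS2} for Stirling numbers of the second kind. Your explicit verification of the slopes via graph classes touching every column is a welcome addition (the paper leaves it implicit), but it does not change the substance of the argument, which is correct.
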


In order to prove this theorem, consider first the Poincar\'e series
of configuration spaces in $\BR^N$.
Fadelland Neurwith in \cite{FaNe:con} have computed that
\begin{equation}
\label{E:PSConf}
H^{\uu{*}}(\Conf(k,\BR^N)[x]=(1-x^{N-1})(1-2x^{N-1})\dots(1-(k-1)x^{N-1}).
\end{equation}

This Poincar\'e series is closely related to Stirling numbers that we quickly
review now. They are integer numbers
\[\stirlingone nk\quad\textrm{ and}\quad\stirlingtwo nk,\]
named\emph{ Stirling numbers of the first and of the second kind},
and defined for example in \cite[Section 6.1]{GKP:CM}.
The
convention about the signs of these numbers varies in the litterature,
 and we choose the
convention of \cite{GKP:CM} where all these numbers are non-negative.
These numbers are defined not only for non-negative integers arguments,
$k,n\in\BN$,
but are extented to all integers by 
\[\stirlingone kn=\stirlingtwo kn=0\textrm{ when $k$ and $n$ have
  opposite signs}
\]
and
\[\stirlingone kn=\stirlingtwo {-n}{-k},\]
(see \cite[(6.33)]{GKP:CM})

We have then
\begin{equation}
\label{E:genS1}
\sum_{j=0}^k\stirlingone{k}{k-j}u^j=(1+u)(1+2u)\dots(1+(k-1)u),
\end{equation}
and replacing $u$ by $x^{N-1}$ in the right hand side we get the
Poincar\'e series of $\Conf(k,\BR^N)$, see \refN{E:PSConf}.

We also have the following generating series for the Stirling numbers
of the second kind,
\begin{equation}
\label{E:genS2}
\sum_{j=0}^\infty\stirlingtwo{k+j}{k}\,u^j
=
\frac1{(1-u)(1-2u)\cdots(1-ku)}.
\end{equation}

We establish the following relation between Stirling numbers of the first and
second kind, which will be the key for our main result. This
combinatotail result is
maybe new. The below proof is due to Manuel Kauers.
\begin{prop}
\label{P:S1S2}
For $j,\ell\geq1$,
\[\sum_{p=0}^\infty\sum_{r=0}^p(-1)^r{p\choose
  r}\stirlingone{\ell r}{\ell r-j}
=
\stirlingtwo{\ell+j}\ell.\]
\end{prop}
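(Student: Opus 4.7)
The plan is to recognize the double sum on the left-hand side as the value $g(-1)$, where $g(r):=\stirlingone{\ell r}{\ell r-j}$ is interpreted as a polynomial in $r$, and then to identify that value with $\stirlingtwo{\ell+j}{\ell}$. First I would verify that $g$ is a polynomial of degree $2j$ in $r$: by \refN{E:genS1}, $\stirlingone{n}{n-j}$ equals the elementary symmetric polynomial $e_j(1,2,\ldots,n-1)$, which is a polynomial of degree $2j$ in $n$. In particular, the $p$-th forward difference $\Delta^p g(0)$ vanishes for $p>2j$, so the outer sum over $p$ in \refP{S1S2} is in fact finite.

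Next I would apply Newton's forward difference formula, which for any polynomial $g$ gives
\[g(r)=\sum_{p\ge 0}\binom{r}{p}\,\Delta^p g(0),\qquad \Delta^p g(0)=\sum_{k=0}^p(-1)^{p-k}\binom{p}{k}\,g(k).\]
Specializing at $r=-1$ with $\binom{-1}{p}=(-1)^p$ yields
\[g(-1)=\sum_{p\ge 0}\sum_{k=0}^p(-1)^k\binom{p}{k}\,g(k),\]
which is precisely the left-hand side of \refP{S1S2}.

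To compute $g(-1)=P_j(-\ell)$, where $P_j(n):=\stirlingone{n}{n-j}$, I would use generating functions. The first-kind Stirling recurrence $\stirlingone{n+1}{k}=n\,\stirlingone{n}{k}+\stirlingone{n}{k-1}$ translates into the polynomial identity $P_j(n+1)=P_j(n)+n\,P_{j-1}(n)$ valid for all $n$. Setting $n=-\ell-1$ and summing over $j$, the generating function $\widetilde F(\ell,u):=\sum_{j\ge 0}P_j(-\ell)\,u^j$ satisfies $\widetilde F(\ell,u)=(1-(\ell+1)u)\,\widetilde F(\ell+1,u)$. Combined with the base case $\widetilde F(0,u)=1$ (from $P_j(0)=\stirlingone{0}{-j}=\delta_{j,0}$), iteration gives $\widetilde F(\ell,u)=\prod_{i=1}^{\ell}(1-iu)^{-1}$, whose coefficient of $u^j$ is $\stirlingtwo{\ell+j}{\ell}$ by \refN{E:genS2}.

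The main subtlety is justifying that the Stirling recurrence, originally combinatorial and established on non-negative integer arguments, extends as a polynomial identity in $n$ so that one may legitimately substitute $n=-\ell-1$; this is automatic since both sides of $P_j(n+1)=P_j(n)+n\,P_{j-1}(n)$ are polynomials in $n$ agreeing on $\BN$. The generating function argument above thus bypasses any explicit use of the duality $\stirlingone{k}{n}=\stirlingtwo{-n}{-k}$ from \cite[(6.33)]{GKP:CM}, relying only on the polynomial nature of the $P_j$ and the first-kind recurrence.
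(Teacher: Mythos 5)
Your proof is correct, and while it shares the paper's central idea --- recognizing the left-hand side as the value at $r=-1$ of the polynomial $g(r)=\stirlingone{\ell r}{\ell r-j}$ --- both supporting steps are carried out by genuinely different means. Where the paper establishes the evaluation identity (\refL{Kauers}) by computing the binomial (Euler) transform of generating series on the basis $q_{(d)}(r)=(r+1)\cdots(r+d)$, you obtain it directly from Newton's forward-difference formula evaluated at $r=-1$, which is more elementary and makes the finiteness of the outer sum transparent (the differences $\Delta^p g(0)$ vanish for $p>2j$). And where the paper identifies $q(-1)=\stirlingone{-\ell}{-\ell-j}$ with $\stirlingtwo{\ell+j}{\ell}$ by citing the polynomial formula (6.44) and the duality (6.33) of \cite{GKP:CM}, you reprove exactly the needed case of that duality from scratch: the first-kind recurrence extends to the polynomial identity $P_j(n+1)=P_j(n)+nP_{j-1}(n)$, which at $n=-\ell-1$ yields the functional equation for $\widetilde F(\ell,u)$ and hence $\prod_{i=1}^{\ell}(1-iu)^{-1}$ via \refN{E:genS2}. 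This buys a self-contained argument (no second-order Eulerian numbers, no negative-index Stirling conventions beyond the values actually computed) at the cost of a slightly longer computation; the paper's route is shorter because it leans on the cited identities. One small point to tighten: the base case $\widetilde F(0,u)=1$ needs $P_j(0)=\delta_{j,0}$, but the polynomial $P_j$ is pinned down by its values at $n\geq 1$, so the equality $P_j(0)=\stirlingone{0}{-j}$ deserves a word of justification --- for instance, the extended recurrence at $n=0$ gives $P_j(1)=P_j(0)+0\cdot P_{j-1}(0)$, hence $P_j(0)=P_j(1)=\delta_{j,0}$; with that remark your argument is complete.
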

The left hand side make sense because almost all terms in the exterior
infinite sum are $0$, as we will see in the proof.

In order to prove this proposition, we need the following lemma.

\begin{lemma}[M.~Kauers]
\label{L:Kauers}
Let $q$ be a polynomial. Then
\[\sum_{p=0}^\infty\left(\sum_{r=0}^p(-1)^r{p\choose r}q(r)\right)=q(-1)\]
where all but finitely many terms of the exterior infinite sum in the left hand
side are $0$.
\end{lemma}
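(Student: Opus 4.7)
The plan is to recognize the inner sum as a value of an iterated finite difference operator, and then invoke Newton's forward-difference expansion. Let $\Delta$ be the forward difference operator, $\Delta f(x) := f(x+1)-f(x)$. A standard identity gives
\[\Delta^p q(0)=\sum_{r=0}^p(-1)^{p-r}\binom{p}{r}q(r),\]
so the inner sum on the left hand side equals $(-1)^p\Delta^p q(0)$. Since $\Delta$ lowers the degree of a polynomial by one, $\Delta^p q=0$ whenever $p>\deg q$, which immediately yields the claim that all but finitely many terms of the outer sum vanish.

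Next I would invoke Newton's forward-difference formula: for any polynomial $q$ of degree $d$,
\[q(x)=\sum_{p=0}^d\binom{x}{p}\Delta^p q(0).\]
Setting $x=-1$ and computing $\binom{-1}{p}=\frac{(-1)(-2)\cdots(-p)}{p!}=(-1)^p$ gives
\[q(-1)=\sum_{p=0}^d(-1)^p\Delta^p q(0)=\sum_{p=0}^\infty\left(\sum_{r=0}^p(-1)^r\binom{p}{r}q(r)\right),\]
which is exactly the identity to be proved.

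If one prefers to avoid quoting Newton's formula, the same conclusion can be reached by a direct linearity argument: both sides of the identity are linear in $q$, so it suffices to verify it on the basis of binomial polynomials $q_k(x):=\binom{x}{k}$ for $k\geq 0$. For this basis one has $\Delta^p q_k(0)=\binom{0}{k-p}=\delta_{p,k}$, so the left hand side collapses to $(-1)^k$, while the right hand side is $q_k(-1)=\binom{-1}{k}=(-1)^k$, matching perfectly.

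There is no real obstacle here: the argument is essentially the observation that $q(-1)$ is the value at $x=-1$ of the Newton forward expansion of $q$. The only point requiring a line of care is the justification that the outer sum is actually finite, which follows at once from the degree-lowering property of $\Delta$; once that is recorded, the identity is immediate from either of the two approaches above.
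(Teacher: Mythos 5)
Your proof is correct, and it takes a genuinely different route from the paper's. You identify the inner sum as $(-1)^p\Delta^p q(0)$ for the forward difference operator $\Delta$, get the finiteness of the outer sum for free from the degree-lowering property of $\Delta$, and then conclude either from Newton's forward-difference expansion evaluated at $x=-1$ (using $\binom{-1}{p}=(-1)^p$) or, equivalently, by linearity after checking the identity on the binomial basis $\binom{x}{k}$, where $\Delta^p\binom{x}{k}=\binom{x}{k-p}$ makes everything collapse to a Kronecker delta. The paper instead works with generating functions: it forms the series $s(x)=\sum_p\bigl(\sum_r(-1)^r\binom{p}{r}q(r)\bigr)x^p$, notes that $\sum_r q(r)y^r$ is the binomial (Euler) transform of $s$, computes $s(x)=d!(1-x)^d$ explicitly for the basis polynomials $q_{(d)}(r)=(r+1)(r+2)\cdots(r+d)$ via $\sum_r q_{(d)}(r)y^r=d!/(1-y)^{d+1}$, and then evaluates at $x=1$ and extends by linearity. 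Both arguments reduce to a convenient polynomial basis by linearity; yours is more elementary in that it avoids quoting the Euler-transform identity and makes the vanishing of all but finitely many terms transparent (it is just $\Delta^p q=0$ for $p>\deg q$), while the paper's generating-series formulation fits naturally with the Poincar\'e- and Euler-series computations it is embedded in.
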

\begin{proof}[Proof of \refL{Kauers}]
Consider the generating series
\begin{equation}
\label{E:s}
s(x):=\sum_{p=0}^\infty\left(\sum_{r=0}^p(-1)^r{p\choose
    r}q(r)\right)x^p.
\end{equation}
We need to prove that $s(x)$ is a finite sum and that $s(1)=q(-1)$.
We first will prove it for the special poynomials $q=q_{(d)}$, for
$d\in\BN$, with
\begin{equation}
\label{E:qd}
q_{(d)}(r):=(r+1)(r+2)\cdots(r+d),
\end{equation}
that is $q_{(0)}(r)=1$, $q_{(1)}(r)=r+1$, $q_{(2)}(r)=r^2+3r+2$, ...
Then we will deduce the general case by linearity.

Since $q_{(0)}(r)=1$, we have
\[\sum_{r=0}^\infty q_{(0)}(r)\,y^r\,=\,\frac{1}{1-y}.
\]
Taking the $d$th derivative on both sides we get
\begin{equation}
\label{E:ad}
\sum_{r=0}^\infty q_{(d)}(r)\,y^r\,=\,\frac{d!}{(1-y)^{d+1}}.
\end{equation}

The series
\[a(y):=\sum_{r=0}^\infty q(r)\,y^r\]
is the Euler transform (or binomial transform) of the generating
series $s(x)$ of \refN{E:s}.
Therefore
\begin{equation}
\label{E:bintr}
s(x)=\frac1{1-x}\,a\left(\frac{x}{x-1}\right).
\end{equation}
When $q=q_{(d)}$, we deduce from \refN{E:ad} and \refN{E:bintr} that
\begin{eqnarray*}
s(x)&=&\frac{1}{1-x}\cdot\frac{d!}{\left(1-\frac{x}{x-1}\right)^{d+1}}\\
&=&d!(1-x)^d.
\end{eqnarray*}
Thus, for $q=q_{(d)}$, $s(x)$ is a polynomial, hence a finite sum.
Moreover 
\[
s(1)=
\begin{cases}
1&\textrm{ if }d=0\\
0&\textrm{ if }d\geq1,
\end{cases}
\]
and we compute from \refN{E:qd}
that
\[
q(-1)=
\begin{cases}
1&\textrm{ if }d=0\\
0&\textrm{ if }d\geq1.
\end{cases}
\]
Therefore $s(1)=q(-1)$ when $q=q_{(d)}$, which implies the lemma for
those polynomials. 

Any polynomial $q$ is a linear combination of the polynomials
$q_{(0)}$, $q_{(1)}$, $q_{(2)}$, ...,
and both sides of the equation in the statement of the lemma are
linear
in the polynomials $q$. Therefore the lemma is true for all
polynomials $q$.
\end{proof}

\begin{proof}[Proof of \refP{S1S2} (M.~Kauers)]
Fix $j,\ell\geq1$. Notice that
\[q(r):=\stirlingone{\ell r}{\ell r-j}\]
is a polynomial in $\ell r$, hence in $r$, because of the following equation from
\cite[(6.44) in Section 6.2]{GKP:CM}:
\[\stirlingone{x}{x-n}=\sum_{i=0}^{n}\eulernb ni{x+i\choose 2n}\]
where $\eulernb ni$ are the Euler numbers of the second order, and
\[
{x+i\choose 2n}=\frac{(x+i)(x+i-1)\cdots(x+i-2n+1)}{(2n)!}
\]
is a polynomial in $x$ of degree $2n$.

By \refL{Kauers}, we deduce that the left hand side of the equation in
the statement of the proposition is
\[q(-1)=\stirlingone{-\ell}{-\ell-j},\]
which is the same as $\stirlingtwo{\ell+j}{\ell}$ by \cite[(6.33) in Section
6.1]{GKP:CM}.
\end{proof}

\begin{proof}[Proof of \refT{chiE2L}]
Consider the Munson-Voli\'c cosimplicial space
$\Confd\langle\ell\bullet,\BR^N\rangle$.
By \refN{E:PSConf} and \refN{E:genS1}, the Poincar\'e series of the $p$th term of the
cosimplicial space is
\begin{eqnarray}
\notag
\Ho^{\uu*}(\Confd\langle\ell p,\BR^N\rangle)[x]
&=
&
\prod_{j=1}^{\ell p-1}(1+j\,x^{N-1})\\
\notag
&=
&
\sum_{j=0}^{lp}
\stirlingone{\ell p}{\ell p-j}(x^{N-1})^j\\
\label{E:PSConflp}
&=
&
\sum_{j\geq0}
\stirlingone{\ell p}{\ell p-j}(x^{N-1})^j.
\end{eqnarray}
Using \refP{chiSE1}, \refN{E:PSConflp}, \refP{S1S2}, and \refN{E:genS2},
we compute that the Euler series of the first page of the \HBKSS is 
\begin{eqnarray*}
\chi(E^{\bullet,*}_1)[x]
&=&
\sum_{p=0}^\infty
\sum_{r=0}^p
(-1)^r{p\choose r}
\Ho^{\uu*}(\Confd\langle\ell r,\BR^N\rangle)[x]
\\
&=&
\sum_{p=0}^\infty
\sum_{r=0}^p
(-1)^r{p\choose r}
\sum_{j\geq0}
\stirlingone{\ell r}{\ell r-j}(x^{N-1})^j
\\
&=&
\sum_{j\geq0}
\left(\sum_{p=0}^\infty
\sum_{r=0}^p
(-1)^r{p\choose r}\stirlingone{\ell r}{\ell r-j}
\right)
(x^{N-1})^j
\\
&=&
\sum_{j\geq0}
\stirlingtwo{\ell +j}{j}
(x^{N-1})^j
\\
&=&
\frac{1}{(1-x^{N-1})(1-2x^{N-1})\dots(1-\ell x^{N-1})}.
\end{eqnarray*}
\end{proof}

\begin{cor}
\label{C:chiE2Kl}
Let $N\geq4$ and consider the \HBKSS of the cosimplicial $\ell$-fold
product of Sinha's
model for the space of long knots.
Then its first page $E_1^{*,*}\{\KK^\ell\}$  has a lower slope $>1$, a finite upper slope and the Euler
series
\[\chi(E^{\bullet,\uu{*}}_1 \{\KK^\ell\})[x]=\frac{1}{(1-x^{N-1})^\ell}\]
\end{cor}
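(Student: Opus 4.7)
The plan is to run the same computational template as \refT{chiE2L}, now with the cosimplicial space $(\Confd\langle\bullet,\BR^N\rangle)^{\times\ell}$. First I would record that its $p$th term is the product $(\Confd\langle p,\BR^N\rangle)^{\times\ell}$, and by K\"unneth together with the Fadell-Neuwirth formula \refN{E:PSConf} its Poincar\'e series is the $\ell$th power
\[
\Ho^{\uu*}\bigl((\Confd\langle p,\BR^N\rangle)^{\times\ell}\bigr)[x]=\left(\sum_{k\geq0}\stirlingone{p}{p-k}(x^{N-1})^k\right)^\ell.
\]
The coefficient of $(x^{N-1})^k$ in this expansion is a finite sum of products of $\ell$ Stirling numbers $\stirlingone{p}{p-k_m}$, each a polynomial in $p$ by \cite[(6.44)]{GKP:CM} (the identity that already powered \refP{S1S2}); thus each coefficient is itself polynomial in $p$.

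Given this polynomiality, the substantive step is to feed the series into \refP{chiSE1} and apply \refL{Kauers} coefficient-by-coefficient. The conclusion is that $\chi(E^{\bullet,\uu*}_1\{\KK^\ell\})[x]$ is obtained by formally evaluating the Poincar\'e series at $p=-1$. Since $\stirlingone{-1}{-1-j}=\stirlingtwo{j+1}{1}=1$ for every $j\geq 0$, each of the $\ell$ factors evaluates to the geometric series $\sum_{j\geq 0}(x^{N-1})^j=\frac{1}{1-x^{N-1}}$, so raising to the $\ell$th power yields the claimed $\frac{1}{(1-x^{N-1})^\ell}$. An equivalent, more conceptual packaging that I would include as a remark is that \refL{Kauers} renders the Euler series \emph{multiplicative} under level-wise products of cosimplicial spaces with polynomial-in-$p$ Poincar\'e coefficients, which reduces the corollary immediately to the $\ell=1$ specialization of \refT{chiE2L} (where $\LL_1=\KK$).

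Finally I would verify the slope hypotheses of \refP{chiSE1}: the lower slope $N-2>1$ and a finite upper slope are already visible in the Fadell-Neuwirth calculation, since $\Ho^q(\Confd\langle p,\BR^N\rangle)$ is concentrated in $0\leq q\leq(N-1)(p-1)$, and the K\"unneth formula propagates the same bounds to the $\ell$-fold product. I do not anticipate any obstacle here; the combinatorial heart of the argument, namely \refL{Kauers} together with the polynomial expression of the relevant Stirling numbers, is already established in the paper, and the corollary is a direct reapplication of that machinery.
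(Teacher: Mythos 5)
Your computation is correct, but you take a genuinely different (and heavier) route than the paper. The paper's proof is essentially two lines: since the $p$th term of the product cosimplicial space is $(X^p)^{\times\ell}$, the K\"unneth formula makes the Euler series of the $E_1$-page multiplicative, so $\chi(E_1^{\bullet,\uu*}\{\KK^\ell\})[x]$ is the $\ell$th power of the Euler series for a single factor; and the single factor is exactly the $\ell=1$ case of \refT{chiE2L} (the knot model is the link model with one string), giving $\bigl(\tfrac{1}{1-x^{N-1}}\bigr)^\ell$. You instead re-run the whole Stirling/Kauers machinery on the product: expand the $\ell$th power of $\sum_j\stirlingone{p}{p-j}(x^{N-1})^j$, observe the coefficients are polynomial in $p$ (via \cite[(6.44)]{GKP:CM}, as in \refP{S1S2}), and apply \refL{Kauers} to evaluate at $p=-1$, using $\stirlingone{-1}{-1-j}=\stirlingtwo{j+1}{1}=1$; since evaluation at $p=-1$ is a ring homomorphism this can be done factor by factor, and the count $\sum_{j_1+\cdots+j_\ell=k}1=\binom{k+\ell-1}{\ell-1}$ indeed reproduces $\tfrac{1}{(1-x^{N-1})^\ell}$. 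The remark you propose to add (that \refL{Kauers} makes the Euler series multiplicative under level-wise products with polynomial Poincar\'e coefficients) is in substance the paper's proof, and it is arguably a cleaner justification of the multiplicativity than the bare appeal to K\"unneth, since the Euler series involves the normalized quotient \refN{E:defE1} rather than the full cochain groups. What your route buys is independence from \refT{chiE2L} as a black box; what the paper's route buys is brevity and no new combinatorics. One caveat: your slope verification is not quite right as stated. The Fadell--Neuwirth range $0\leq q\leq(N-1)(p-1)$ only yields the \emph{finite upper} slope; the \emph{lower} slope $>1$ is a statement about the vanishing of the degeneracy quotient \refN{E:defE1} in low degrees (nonzero normalized classes in degree $q=j(N-1)$ force $p\leq 2j$, giving slope $(N-1)/2>1$ for $N\geq4$, not $N-2$), so it is not ``visible'' from the Poincar\'e series alone --- though the paper itself is equally silent on this point.
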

\begin{proof}
Given a cosimplicial space $X^\bullet$, one can consider its
$\ell$-fold product $(X^\bullet)^{\times\ell}$ which is the
cosimplicial space defined with its $p$th term given by
\[(X^p)^{\times\ell}=X^p\times\dots\times X^p.\]
and the obvious cofaces and codegeneracies induced by those on
$X^\bullet$.

By the Kunneth formula, the Euler  series of the $E_1$-page of the \HBKSS of
$(X^\bullet)^{\times\ell}$
is then the $\ell$-th power of the Euler  series for $X^\bullet$.
Since the cosimplicial model for long knots is exactly the
cosimplicial models for long links with $1$ string and since
the Euler series for those have been computed in
\refT{chiE2L} to be
\[\frac{1}{(1-x^{N-1})},\]
the corollary follows.
\end{proof}

We finish this paper by some application to the growth of the
dimension of the totalisation of the $E_2$ page of a spectral sequence
computing
\[\Ho^*(\LL_\ell,\KK^{\times\ell}).\]
By \refN{E:KLK}, we have a pair of spaces $(\LL_\ell,\KK^{\times\ell})$
in which the subspace is a retract (up to homotopy) of the bigger
space $\LL_\ell$. 
Since the retraction is at the level of the cosimplicial models, we
have a spectral sequence $E_r^{*,*}\{(\LL_\ell,\KK^{\times\ell})\}$
 which converges  to $\Ho^*(\LL_\ell,\KK^{\times\ell})$, and
the spectral sequence is a quotient
\[E_r^{*,*}\{(\LL_\ell,\KK^{\times\ell})\}\cong
E_r^{*,*}\{\LL_\ell\}\,/E_r^{*,*}\{\KK^{`*\ell}\}.
\]
Therefore we have
\[
\chi\left(E_2^{\bullet,\uu*}\{(\LL_\ell,\KK^{\times\ell})\}\right)[x]\,=\,
\chi\left(E_2^{\bullet,\uu*}\{\LL_\ell\}\right)[x]\,-\,
\chi\left(E_2^{\bullet,\uu*}\{\KK^{\times\ell}\}\right)[x].
\]

By \refT{chiE2L} and \refC{chiE2Kl} we deduce that
\[\chi\left(E_2^{\bullet,\uu*}\{(\LL_\ell,\KK^{\times\ell})\}\right)[x]\,=\,
\frac{1}{(1-x^{N-1})(1-2x^{N-1})\cdots(1-\ell x^{N-1})}\,-\,
\frac{1}{(1-x^{N-1})^\ell}.\]
The coefficients of that series have an exponential growth of rate
$(\ell)^{1/(N-1)}>1$.  By \refP{Tot>chi} the Betti numbers of the
totalization of the $E_2$ page of the spectral sequence have the same
growth.

We conjecture that this spectral sequence collapses at the $E_2$-page
(this is known to be the case for the spectral sequence for long
knots, and probably the same method will work for the space of long
links, hence also for the spectral sequence of the pair 
$(\LL_\ell,\KK^{\times\ell})$.)

\bibliographystyle{plain}
\def\cprime{$'$}






\end{document}